\newtheorem{theorem}{Theorem}
\newtheorem*{MainTheorem*}{Main Theorem}
\newtheorem{corollary}[theorem]{Corollary}
\newtheorem{proposition}[theorem]{Proposition}
\newtheorem{claim}{Claim}
\theoremstyle{definition}
\theoremstyle{definition}
\newtheorem*{definition*}{Definition}
\theoremstyle{definition}
\newtheorem{question}{Question}
\newtheoremstyle{named}%
    {}{}{\itshape}{}{\bfseries}{.}{.5em}{\thmnote{#3}}
\theoremstyle{named}
\newtheorem*{namedtheorem}{Theorem}
\theoremstyle{remark}
\newtheorem*{remark*}{Remark}
\newtheorem{remark}{Remark}
\theoremstyle{remark}
\newcommand{\R}{\mathbb{R}}
\newcommand{\Q}{\mathbb{Q}}
\newcommand{\SO}{\mathrm{SO}}
\newcommand{\Isom}{\mathrm{Isom}}
\newcommand{\stab}{\mathrm{stab}}
\newcommand{\wt}{\widetilde}
\newcommand{\CH}{\mathrm{CH}}
\newcommand{\sys}{\mathrm{sys}}
\title{Systolic lattice extensions of classical Schottky groups}
\author{Junzhi Huang}
\author{Matthew Zevenbergen}
\date{\today}
\begin{document}

\address{Department of Mathematics, Yale University}
\email{junzhi.huang@yale.edu}
\address{Department of Mathematics, Boston College}
\email{zevenber@bc.edu}

\begin{abstract}
    We produce lattice extensions of a dense family of classical Schottky subgroups of the isometry group of $d$-dimensional hyperbolic space. The extensions produced are said to be systolic, since all loxodromic elements with short translation length are conjugate into the Schottky groups. Various corollaries are obtained, in particular showing that for all $d\geq3$, the set of complex translation lengths realized by systoles of closed hyperbolic $d$-manifolds is dense inside the set of all possible complex translation lengths. We also consider complex translation lengths in arithmetic hyperbolic $d$-manifolds, and provide a new way to construct non-arithmetic lattices.
\end{abstract}

\maketitle

\section{Introduction}

For a discrete group of isometries of $d$-dimensional hyperbolic space, $G\leq\Isom^+(\mathbb{H}^d)$, it is natural to ask whether $G$ is contained in a lattice. The dimension $3$ version of this question was considered by Brooks \cite{Brooks}, where he showed that every faithful geometrically finite Kleinian group representation can be approximated by faithful representations whose image is contained in a lattice. We will be interested in a particularly strong type of lattice extension $\Gamma$ of $G$, in which some geometric properties of $G\backslash\mathbb{H}^d$ are inherited by $\Gamma\backslash\mathbb{H}^d$. We use $\ell(g)$ for the translation length of a loxodromic element $g\in\Isom^+(\mathbb{H}^d)$.\par

\begin{definition*}
    For $D>0$ and $G\leq\Isom^+(\mathbb{H}^d)$, a subgroup $\Gamma\leq\Isom^+(\mathbb{H}^d)$ is a \textbf{$D$-systolic lattice extension of $G$} if 
    \begin{enumerate}
        \item $\Gamma$ is a lattice,
        \item $G\leq\Gamma$, 
        \item every loxodromic $g\in \Gamma$ with $\ell(g)\leq D$ is conjugate in $\Gamma$ to an element of $G$.
    \end{enumerate}
\end{definition*}

 Hence, if $\Gamma$ is a $D$-systolic lattice extension of $G$, then all closed geodesics of length at most $D$ in $\Gamma\backslash\mathbb{H}^d$ descend from closed geodesics in $G\backslash\mathbb{H}^d$. \par 

Motivated by a desire to include explicit elements of $\Isom^+(\mathbb{H}^d)$ in lattices, we will focus on the family of classical Schottky subgroups of \[\SO'(q,k)\leq\SO'(q,\R)\cong\Isom^+(\mathbb{H}^d)\] where $k$ is  a totally real number field, $q$ is a quadratic form over $k$ in $d+1$ variables, and $(k,q)$ is admissible. We refer readers to Section \ref{sec:background} for background on quadratic forms and classical Schottky groups. The main theorem we prove in this paper is the following: \par

\begin{namedtheorem}[\hypertarget{Theorem-SchottkyExtension}{Main Theorem}]
    If $(k,q)$ is admissible and $D>0$, every classical Schottky subgroup $F\leq \SO'(q,k)$ admits a torsion-free $D$-systolic lattice extension. If $q$ is anisotropic, this extension can be chosen to be cocompact.
\end{namedtheorem}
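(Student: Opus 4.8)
The plan is to realize $F$ inside a lattice $\Gamma$ by ``capping'' the Schottky handlebody $N_F=\mathbb H^d/F$ with a thick finite-volume piece, keeping every gluing totally geodesic so that the result is genuinely hyperbolic. Realize $F$ by a classical Schottky configuration with pairwise disjoint half-spaces $H_i^{\pm}$, $i=1,\dots,n$, defined over $k$, with $g_i(\mathbb H^d\setminus H_i^-)=H_i^+$; then $F$ is free of rank $n$, convex-cocompact, with compact convex core $C_F=\mathrm{Hull}(\Lambda_F)/F$, and every closed geodesic of $N_F$ of length $\le D$ represents a conjugacy class of a loxodromic element of $F$ of translation length $\le D$. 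The crucial feature of the \emph{classical} case is that $F$ has a fundamental polytope $E=\mathbb H^d\setminus\bigcup_i(H_i^-\cup H_i^+)$, an infinite-volume convex polytope all of whose facets $\pi_i^{\pm}=\partial H_i^{\pm}$ are totally geodesic, with side-pairings $g_i\colon\pi_i^-\to\pi_i^+$; the only non-compact directions of $E$ point into the domain of discontinuity, and $C_F$ sits at positive distance from them.

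The idea is to enlarge $E$ to a finite-volume hyperbolic polytope $E^{*}=E\cap\bigcap_j H_{\rho_j}^{-}$ by chopping off those non-compact directions with finitely many additional totally geodesic hyperplanes $\rho_j$, placed far out so that $C_F\subset E^{*}$, and then to prescribe side-pairings $h_j$ of the new facets so that $E^{*}$, together with the pairings $\{g_i\}\cup\{h_j\}$, satisfies the hypotheses of Poincaré's polyhedron theorem. The resulting group $\Gamma=\langle g_1,\dots,g_n,h_1,\dots,h_m\rangle$ then has $E^{*}$ as a fundamental domain, hence is discrete of finite covolume, i.e. a lattice, and contains $F=\langle g_1,\dots,g_n\rangle$; it is cocompact precisely when $E^{*}$ can be taken compact, which is arranged exactly when $q$ is anisotropic (otherwise $E^{*}$ acquires finitely many cusps). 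To make $\Gamma$ torsion-free and, crucially, \emph{systolic}, the cap should be ``deep'': take the $\rho_j$ far from $C_F$ and the pairing isometries $h_j$ inside a deep congruence subgroup of an ambient arithmetic lattice, so that the part of $\Gamma\backslash\mathbb H^d$ away from $E$ has no closed geodesic of length $\le D$ and has a $D$-thick collar. Here one uses the density of classical Schottky configurations of a fixed combinatorial type to first move the configuration into arithmetic position (over $k$, or, when $q$ is isotropic, into position compatible with the relevant $S$-arithmetic group via strong approximation), which is also what lets the $h_j$ be chosen arithmetic.

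Granting the construction, the three properties follow. $\Gamma$ is a lattice since $\mathrm{vol}(E^{*})<\infty$, cocompact when $E^{*}$ is compact (the anisotropic case), and torsion-free after, if necessary, a further congruence refinement of the cap. The inclusion $F\le\Gamma$ is built in. For systolicity, a closed geodesic $\gamma^{*}$ in $\Gamma\backslash\mathbb H^d$ with $\ell(\gamma^{*})\le D$ cannot meet the cap region: crossing any $\rho_j$-facet would send it into a deep congruence piece, and a geodesic arc entering and leaving such a piece has length at least the (large) distance between distinct lifts of its totally geodesic boundary, which exceeds $D$. Hence $\gamma^{*}$ remains in the part of $E^{*}/\Gamma$ covered by $E$, a compact core of $N_F$ that deformation retracts onto $C_F$; being short, $\gamma^{*}$ is therefore freely homotopic into $C_F$, so $\gamma$ is $\Gamma$-conjugate into $F$.

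The main obstacle is the construction itself: choosing the capping hyperplanes $\rho_j$ and side-pairings $h_j$ so that simultaneously (i) $E^{*}$ has finite volume and contains $C_F$, (ii) Poincaré's theorem applies (each edge cycle of $E^{*}$ has dihedral angle sum a submultiple of $2\pi$, and the completeness conditions hold), (iii) $F$ is an honest subgroup of the group generated by all the pairings rather than merely a quotient, and (iv) the cap is thick enough to kill every unwanted short geodesic. Reconciling (iii) with (iv) is the heart of the matter, since $F$ is infinite and so cannot itself be passed to a congruence cover while everything extraneous must be; carrying this out, together with the separate treatment of the isotropic case where $F$ need not lie in any lattice commensurable with $\SO'(q,\mathcal O_k)$, is where the real work lies.
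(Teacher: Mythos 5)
Your proposal is a plan whose central step is precisely the part that is never carried out, and that step is the entire content of the theorem. You propose to truncate the infinite-volume Schottky fundamental polytope $E$ by finitely many hyperplanes $\rho_j$ and then invoke Poincar\'e's polyhedron theorem for the side-pairings $\{g_i\}\cup\{h_j\}$. But you give no method for choosing the $\rho_j$ and $h_j$ so that (a) each $g_i$ carries the truncated facet $E^{*}\cap\pi_i^{-}$ isometrically onto $E^{*}\cap\pi_i^{+}$ (already a strong compatibility constraint on the truncation), (b) the new facets are paired isometrically by the $h_j$, and (c) every codimension-two cycle condition and completeness condition of Poincar\'e's theorem holds. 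In dimension $d\geq4$ there is no general procedure for producing such polytopes; this is essentially the problem of constructing hyperbolic Coxeter-type polytopes with prescribed facets, which is known to be severely obstructed in high dimensions. Saying the $h_j$ should lie in a ``deep congruence subgroup'' does not address the angle and completeness conditions: congruence depth controls translation lengths of group elements, not the combinatorics of a fundamental polytope. You flag this reconciliation as ``where the real work lies,'' which is an accurate self-assessment: the proof is not there.

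The paper avoids Poincar\'e's theorem entirely by running Agol's inbreeding construction inside the arithmetic lattice $\Gamma_1=\SO'(q,\mathcal{O}_k)$. After perturbing the Schottky half-spaces to be defined over $k$, the stabilizers $\stab_{\Gamma_1}(P_i)$ are lattices in $\Isom(P_i)$; separability results (Selberg's lemma, residual finiteness, separability of geometrically finite subgroups due to Bergeron--Haglund--Wise, together with Scott's criterion) produce a finite-index subgroup $\Gamma_3$ in which the $P_i$ descend to disjointly embedded two-sided totally geodesic hypersurfaces, the systole exceeds $D$, and a large compact neighborhood of the convex core of $F$ embeds. One then cuts $M_3$ along these hypersurfaces and reglues using the $g_i$ (which lie in the commensurator of $\Gamma_1$), with discreteness and injectivity of $F$ supplied by the Klein--Maskit combination theorems rather than by a fundamental-domain argument. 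The ``cap'' in your language is thus a finite cover of an arithmetic manifold, obtained for free from separability, rather than a hand-built polytope. Your systolicity heuristic (short geodesics cannot penetrate the cap) is in the right spirit and corresponds to the paper's argument using the sets $X$, $C_1$, $C_2$ and conditions (A2), (A6), (B2); but without the construction the heuristic has nothing to attach to. Finally, your worry about the isotropic case is misplaced: no $S$-arithmetic or strong-approximation input is needed, since $\SO'(q,\mathcal{O}_k)$ is still a (non-cocompact) lattice and the cusps are handled by a Margulis-constant argument.
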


The starting place for our techniques in this theorem is the ``inbreeding" construction of Agol \cite{Agol}, which he uses to construct closed hyperbolic $4$-manifolds with arbitrarily short closed geodesics. The \hyperlink{Theorem-SchottkyExtension}{Main Theorem} will be proven in Section \ref{sec:main-proof}.\par

\subsection{Density of Schottky groups with lattice extensions} For a first application, we define $\mathcal{S}_{d,m}$ to be the space of faithful representations $\rho:\mathbb{F}_m\rightarrow\Isom^+(\mathbb{H}^d)$ of the free group $\mathbb{F}_m:=\langle a_1,\dots,a_m\rangle$ such that $\{\rho(a_1),\dots,\rho(a_m)\}$ is a standard generating set for a classical Schottky group (see Section \ref{subsec:Schottky}). We equip $\mathcal{S}_{d,m}$ with the algebraic topology (see \cite{MatsuzakiTaniguchi},\cite{McMullen}). The following corollary of the \hyperlink{Theorem-SchottkyExtension}{Main Theorem} follows from the openness of $\mathcal{S}_{d,m}$ in $\text{Hom}(\mathbb{F}_m,\Isom^+(\mathbb{H}^d))$ and density of $\SO'(q,k)$ in $\SO'(q,\R)$, where some anisotropic $q$ as above is fixed.

\begin{corollary}
    \label{Corollary:ExtandableSchottky}
    Representations whose images admits cocompact extensions are dense in $\mathcal{S}_{d,m}$.
\end{corollary}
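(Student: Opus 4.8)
The plan is to flesh out the deduction sketched after the statement: combine the \hyperlink{Theorem-SchottkyExtension}{Main Theorem} (in its cocompact, anisotropic form) with the openness of the classical Schottky condition and with weak approximation for $\SO(q)$. First I would fix, once and for all, an admissible pair $(k,q)$ with $q$ anisotropic (such pairs exist; see Section \ref{sec:background}), together with an isomorphism $\SO'(q,\R)\cong\Isom^+(\mathbb{H}^d)$. Under this identification the representation variety becomes $\text{Hom}(\mathbb{F}_m,\Isom^+(\mathbb{H}^d))\cong\SO'(q,\R)^m$ as topological spaces, and $\SO'(q,k)^m$ is a dense subset of it, since $\SO'(q,k)$ is dense in $\SO'(q,\R)$ in the real topology.

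Next, let $\rho\in\mathcal{S}_{d,m}$ be arbitrary and let $V$ be any neighborhood of $\rho$ in $\text{Hom}(\mathbb{F}_m,\Isom^+(\mathbb{H}^d))$. Because $\mathcal{S}_{d,m}$ is open, I may shrink $V$ so that $V\subseteq\mathcal{S}_{d,m}$. By density of $\SO'(q,k)^m$, I can choose $\rho'\in V$ with $\rho'(a_i)\in\SO'(q,k)$ for each $i$. Since $\rho'\in\mathcal{S}_{d,m}$, its image $F:=\rho'(\mathbb{F}_m)=\langle\rho'(a_1),\dots,\rho'(a_m)\rangle$ is a classical Schottky group with standard generating set $\{\rho'(a_1),\dots,\rho'(a_m)\}$, and since each of these generators lies in $\SO'(q,k)$ we get $F\leq\SO'(q,k)$. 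Thus $F$ is a classical Schottky subgroup of $\SO'(q,k)$, and applying the \hyperlink{Theorem-SchottkyExtension}{Main Theorem} to $F$ (with any fixed $D$, say $D=1$, whose value is irrelevant here) together with the anisotropy of $q$ produces a cocompact lattice extension $\Gamma\geq F$. Hence $\rho'\in V$ is a representation whose image admits a cocompact extension; as $\rho$ and $V$ were arbitrary, such representations are dense in $\mathcal{S}_{d,m}$.

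The argument is routine once the two topological inputs are available, so I do not anticipate a serious obstacle; the only two points requiring care are exactly those two inputs. The first is that membership in $\mathcal{S}_{d,m}$ is an open condition: being a standard generating set of a classical Schottky group amounts to the existence of a configuration of $2m$ pairwise disjoint closed round balls whose boundaries are paired by the generators, and this is stable under sufficiently small perturbation of $\rho(a_1),\dots,\rho(a_m)$. The second is that $\SO'(q,k)$ is dense in $\SO'(q,\R)$, which follows from weak approximation (equivalently, unirationality of $\SO(q)$ over $k$) as recalled in Section \ref{sec:background}; concretely, $\SO'(q,k)$ already Zariski-dense in the connected $k$-group $\SO(q)$ together with the fact that $k$ is dense in $\R$ gives real-topology density of $\SO'(q,k)$ in the identity component $\SO'(q,\R)$. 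With both in hand the proof is the three-line chain above.
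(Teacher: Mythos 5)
Your proposal is correct and is exactly the argument the paper intends: the paper gives only the one-sentence derivation (openness of $\mathcal{S}_{d,m}$ plus density of $\SO'(q,k)$ in $\SO'(q,\R)$ for a fixed anisotropic admissible pair, then apply the \hyperlink{Theorem-SchottkyExtension}{Main Theorem}), and your write-up fills in precisely those steps.
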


This corollary extends the theorem of Brooks \cite{Brooks} to all dimensions, for this family of representations. We note, however, that our techniques are completely different from those of Brooks. We also mention a related result of Bowen \cite{Bowen}, which shows that any free group representation $\rho:F\rightarrow\Isom^+(\mathbb{H}^d)$ can be perturbed to a virtual homomorphism into any given lattice. One cannot, however, use Bowen's result to construct approximations of any specific free groups with the approximations themselves extending to lattices.   \par

We also record an application to the space $\mathcal{D}_d$ of discrete torsion-free subgroups of $\Isom^+(\mathbb{H}^d)$, equipped with the Chabauty (or geometric) topology (see \cite[Ch. E]{BenedettiPetronio}). The $d=3$ version of this statement again follows from the result of Brooks (see also \cite{PurcellSouto}, \cite{FuchsPurcellStewart}, and \cite[Sec. 3]{Zevenbergen}). This corollary will be proven in Section \ref{sec:Chabautyapplications}.

\begin{corollary}
\label{Corollary:chabauty}
    For $d\geq2$, every classical Schottky subgroup $F\leq\Isom^+(\mathbb{H}^d)$ is a Chabauty limit of cocompact lattices in $\mathcal{D}_d$. In particular, if $d\geq 3$, then $\mathcal{D}_d$ is not locally connected at $F$.
\end{corollary}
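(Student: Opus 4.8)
The plan is to derive the corollary from the \hyperlink{Theorem-SchottkyExtension}{Main Theorem} together with standard properties of the Chabauty topology (\cite[Ch. E]{BenedettiPetronio}). Fix, as in Corollary~\ref{Corollary:ExtandableSchottky}, an admissible pair $(k,q)$ with $q$ anisotropic and $\SO'(q,\R)\cong\Isom^+(\mathbb{H}^d)$, so that $\SO'(q,k)$ is dense in $\SO'(q,\R)$. Since it suffices to place a cocompact lattice into an arbitrary Chabauty neighborhood $U$ of $F$, I would first reduce to the case $F\le\SO'(q,k)$: writing $F=\rho(\mathbb{F}_m)$ for $\rho\in\mathcal{S}_{d,m}$, I perturb each generator into the dense subgroup $\SO'(q,k)$; by openness of $\mathcal{S}_{d,m}$ in $\mathrm{Hom}(\mathbb{F}_m,\Isom^+(\mathbb{H}^d))$ the perturbed $\rho'$ still has classical Schottky image $F'=\rho'(\mathbb{F}_m)\le\SO'(q,k)$, and since Schottky groups are convex cocompact, algebraic convergence of the generators forces geometric (Chabauty) convergence $F'\to F$ (see \cite{McMullen}, \cite{MatsuzakiTaniguchi}), so $F'\in U$ for a sufficiently small perturbation. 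It therefore suffices to realize $F'$ as a Chabauty limit of cocompact lattices.

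Applying the \hyperlink{Theorem-SchottkyExtension}{Main Theorem} to $F'$ and $D=1,2,3,\dots$ produces torsion-free cocompact $D$-systolic lattice extensions $\Gamma_D\supseteq F'$, and the claim is that $\Gamma_D\to F'$ in $\mathcal{D}_d$. Two ingredients enter. First, the $D$-systolic property yields a uniform positive lower bound on the systole of $\Gamma_D$ for $D$ large: every nontrivial element of $\Gamma_D$ is loxodromic (it is torsion-free and cocompact), and one of translation length $\le D$ is $\Gamma_D$-conjugate into $F'$, hence has length $\ge\sys(F')$, so $\ell(\gamma)\ge\min(D,\sys(F'))$ and no nontrivial elements accumulate at the identity. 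Second --- the substantive point --- I would invoke the construction behind the \hyperlink{Theorem-SchottkyExtension}{Main Theorem} (Section~\ref{sec:main-proof}): the extension is built by attaching pieces to a neighborhood of the convex core of $\mathbb{H}^d/F'$ of radius $R_D\to\infty$, so that, basing both quotients at the image of a fixed lift $\wt x$ lying in that core, the ball of radius $r$ about the basepoint in $\mathbb{H}^d/\Gamma_D$ is isometric to the corresponding ball in $\mathbb{H}^d/F'$ for every $r$ once $D$ is large. This is precisely pointed Gromov--Hausdorff convergence $(\mathbb{H}^d/\Gamma_D,*)\to(\mathbb{H}^d/F',*)$, i.e.\ $\Gamma_D\to F'$; combined with the reduction this gives the first assertion for all $d\ge 2$.

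For the statement about local connectedness, assume $d\ge 3$; I would in fact show $F$ has \emph{no} connected Chabauty neighborhood. The key observation is that, for $d\ge 3$, the conjugation orbit $\mathcal{O}_\Lambda=\{g\Lambda g^{-1}:g\in\Isom^+(\mathbb{H}^d)\}$ of a cocompact lattice $\Lambda$ is clopen in $\mathcal{D}_d$. It is closed because it is a continuous image of the compact space $\Isom^+(\mathbb{H}^d)/N(\Lambda)$ ($N(\Lambda)$ its normalizer). It is open because (i) cocompactness is an open condition in $\mathcal{D}_d$ --- a torsion-free discrete group Chabauty-close enough to $\Lambda$ has a quotient whose large ball about the basepoint is Gromov--Hausdorff-close to the closed manifold $\mathbb{H}^d/\Lambda$, forcing that quotient to be bounded, hence compact --- and (ii) by Mostow rigidity in dimension $d\ge 3$, a cocompact lattice sufficiently Chabauty-close to $\Lambda$ has quotient Gromov--Hausdorff-close to, hence diffeomorphic to, hence isometric to $\mathbb{H}^d/\Lambda$, so is a conjugate of $\Lambda$. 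Now take the cocompact lattices $\Gamma_D$ produced above, which converge to $F$ after the reduction. Any Chabauty neighborhood $U$ of $F$ contains some $\Gamma_D$, and then $U\cap\mathcal{O}_{\Gamma_D}$ is a nonempty clopen subset of $U$; it is proper, since $F\in U$ is a classical Schottky group, hence of infinite covolume, hence not conjugate to the lattice $\Gamma_D$. Thus $U$ is disconnected, and $\mathcal{D}_d$ is not locally connected at $F$.

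The soft steps --- algebraic-to-geometric convergence of Schottky groups, the systole lower bound, openness of cocompactness, and clopenness of lattice orbits --- are routine. The real obstacle is the convergence $\Gamma_D\to F'$: it amounts to saying that the systolic extensions can be arranged to agree with $\mathbb{H}^d/F'$ on arbitrarily large balls about a fixed core basepoint, which I expect to require tracing through the inbreeding-type construction of the \hyperlink{Theorem-SchottkyExtension}{Main Theorem} rather than using its statement as a black box, and is where most of the work would go.
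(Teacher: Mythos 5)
Your proposal is correct and follows essentially the same route as the paper: perturb $F$ into $\SO'(q,k)$, apply the \hyperlink{Theorem-SchottkyExtension}{Main Theorem}, use the fact that arbitrarily large neighborhoods of the convex core of $F'\backslash\mathbb{H}^d$ embed into the extensions to deduce Chabauty convergence, and then use that for $d\geq3$ the conjugation orbit of a cocompact lattice is a connected component of $\mathcal{D}_d$. The one step you flag as requiring a re-examination of the construction --- that the extensions agree with $F'\backslash\mathbb{H}^d$ on arbitrarily large neighborhoods of the convex core --- is exactly what the paper isolates as Remark \ref{Remark:ConvexCoreNeighborhood}, so it is available without reopening the proof; and the clopenness of lattice orbits, which you sketch via Mostow rigidity, is simply cited there (\cite[Thm. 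E.2.4]{BenedettiPetronio}, \cite{Zevenbergen}).
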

    
\subsection{Complex systoles in lattices} The \hyperlink{Theorem-SchottkyExtension}{Main Theorem} also yields several interesting corollaries about systoles of hyperbolic manifolds. For a lattice $\Gamma\leq\Isom^+(\mathbb{H}^d)$, the systole $\sys(\Gamma\backslash\mathbb{H}^d)$ of the finite volume hyperbolic $d$-manifold $\Gamma\backslash\mathbb{H}^d$ is the length of the a shortest closed geodesic in $\Gamma\backslash\mathbb{H}^d$. Equivalently, \[\sys(\Gamma\backslash\mathbb{H}^d):=\min_{\gamma\in\Gamma\text{ loxodromic}}\ell(\gamma).\] It is classical that there are closed hyperbolic $2$ and $3$-manifolds with arbitrarily small systole, the dimension $2$ case following from Teichm\"{u}ller theory and the dimension $3$ case following from Thurston's hyperbolic Dehn surgery theorem (see \cite{ThurstonNotes}*{Thm. 5.8.2}). The existence of closed hyperbolic $4$-manifolds with arbitrarily small systole was established by Agol \cite{Agol}, and was generalized to all dimensions by results of Bergeron-Haglund-Wise \cite{BHW} and Belolipetsky-Thomson \cite{BelolipetskyThomson}. Recently, Douba and the first author \cite{DoubaHuang} used similar techniques to show that the set of systoles of closed hyperbolic $d$-manifolds is dense in the positive reals, for all $d\geq2$. The techniques used in this family of papers (\cite{Agol}, \cite{BHW}, \cite{BelolipetskyThomson}, \cite{DoubaHuang}) are key influences in what appears here. \par

In addition to the length $\ell(g)$, each loxodromic $g\in\Isom^+(\mathbb{H}^d)$ has an associated \textbf{holonomy} $h(g)\in C(\SO(d-1))$, where $C(\SO(d-1))$ is the set of conjugacy classes in $\SO(d-1)$, equipped with the quotient topology. The holonomy $h(g)$ is determined by parallel transport of vectors along the axis of $g$. The pair $(\ell(g),h(g))$ is called the \textbf{complex translation length of }$g$. Accordingly, a \textbf{complex systole} of a hyperbolic $d$-manifold $M=\Gamma\backslash\mathbb{H}^d$ is defined to be the complex translation length of a loxodromic element $g\in\Gamma$ with $\sys(M)=\ell(g)$. Note that $M$ may have multiple complex systoles if $M$ contains multiple shortest closed geodesics.\par

In each of the papers \cite{Agol}, \cite{BelolipetskyThomson}, \cite{DoubaHuang}, the closed geodesics constructed which realize the systole have trivial holonomy. The next two corollaries will extend these results, in particular constructing the first examples known to the authors of closed hyperbolic $d$-manifolds with arbitrarily short geodesics whose associated holonomy is non-trivial, for $d\geq 4$.\par

We start with the following, which will be proven in Section \ref{sec:systoleapplications}, and was suggested to the authors by Sami Douba.

\begin{corollary}\label{cor:irrational-holonomy}
    For any dimension $d\geq3$ and any $\varepsilon>0$, there exists a closed hyperbolic $d$-manifold that has a shortest closed geodesic with length less than $\varepsilon$ and infinite order holonomy.
\end{corollary}

The next corollary extends the density result of \cite{DoubaHuang} to complex systoles.

\begin{corollary}\label{cor:dense-complex-systole}
    For any dimension $d\geq3$, the set of complex systoles of closed hyperbolic $d$-manifolds is dense inside $\R^+\times C(\SO(d-1))$.
\end{corollary}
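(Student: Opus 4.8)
The plan is to derive this from the \hyperlink{Theorem-SchottkyExtension}{Main Theorem}, using the observation that once $D$ exceeds the translation length of a loxodromic element $g$, a $D$-systolic lattice extension of the cyclic group $\langle g\rangle$ has the complex translation length of $g$ among its complex systoles. Thus it suffices to realize an arbitrary complex translation length, up to small error, by a loxodromic element lying inside a suitable $\SO'(q,k)$.

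First I would fix, for the given $d \ge 3$, an admissible pair $(k,q)$ with $q$ anisotropic and $\SO'(q,k)$ dense in $\SO'(q,\R) \cong \Isom^+(\mathbb{H}^d)$; the existence of such a pair, and this density, are recorded in Section \ref{sec:background} and are exactly the ingredients behind Corollary \ref{Corollary:ExtandableSchottky}. Given a target $(\ell_0,h_0) \in \R^+ \times C(\SO(d-1))$ and $\varepsilon > 0$, I would then exhibit a loxodromic $g_0 \in \Isom^+(\mathbb{H}^d)$ with complex translation length exactly $(\ell_0,h_0)$: fixing a geodesic line, its stabilizer in $\Isom^+(\mathbb{H}^d)$ is isomorphic to $\R \times \SO(d-1)$ (translation along the line composed with rotation of the normal directions), and one takes the element corresponding to $(\ell_0,R)$ with $R$ any representative of the class $h_0$. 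Since the loxodromic elements form an open subset of $\Isom^+(\mathbb{H}^d)$ on which $g \mapsto (\ell(g),h(g))$ is continuous --- here using that $C(\SO(d-1))$ carries the quotient topology, so $R\mapsto[R]$ is continuous, and that the axis, translation length, and rotational part of a loxodromic vary continuously with it --- the density of $\SO'(q,k)$ furnishes a loxodromic $g \in \SO'(q,k)$ with $(\ell(g),h(g))$ within $\varepsilon$ of $(\ell_0,h_0)$.

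Now $F := \langle g\rangle$ is a rank-one classical Schottky subgroup of $\SO'(q,k)$ (take two small disjoint balls about the fixed points of $g$). Applying the \hyperlink{Theorem-SchottkyExtension}{Main Theorem} with any $D > \ell(g)$ produces a torsion-free cocompact $D$-systolic lattice extension $\Gamma \ge F$; put $M := \Gamma\backslash\mathbb{H}^d$, a closed hyperbolic $d$-manifold. To read off the systole: any loxodromic $\gamma \in \Gamma$ with $\ell(\gamma) \le D$ is conjugate in $\Gamma$ into $F$, hence to $g^{n}$ for some $n \ne 0$, so $\ell(\gamma) = |n|\,\ell(g) \ge \ell(g)$; any loxodromic $\gamma$ with $\ell(\gamma) > D$ satisfies $\ell(\gamma) > D > \ell(g)$. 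Since $g \in \Gamma$ attains the value $\ell(g)$, we conclude $\sys(M) = \ell(g)$, realized by the loxodromic $g$, so $(\ell(g),h(g))$ is a complex systole of $M$ within $\varepsilon$ of $(\ell_0,h_0)$. As the target and $\varepsilon$ were arbitrary, density follows.

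The only step requiring care is the perturbation: one must move $g_0$ into the $k$-points while keeping it loxodromic and simultaneously controlling its complex translation length. This is precisely what the openness of the loxodromic locus together with continuity of $(\ell,h)$ there delivers, and everything afterward is a formal consequence of the \hyperlink{Theorem-SchottkyExtension}{Main Theorem}. (If one prefers not to appeal to rank-one classical Schottky groups, replace $F$ by $\langle g, g'\rangle$ for a loxodromic $g'$ of translation length exceeding $D$ placed in Schottky position with $g$; the systole computation is unaffected.)
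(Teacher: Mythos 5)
Your proposal is correct and follows essentially the same route as the paper: approximate the target complex translation length by a loxodromic $g\in\SO'(q,k)$ using density, apply the \hyperlink{Theorem-SchottkyExtension}{Main Theorem} to the rank-one Schottky group $\langle g\rangle$ with $D\geq\ell(g)$, and read off that $g$ realizes the systole. You simply spell out the continuity and systole-computation details that the paper leaves implicit.
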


\begin{proof}
Given admissible $(k,q)$ with $q$ anisotropic, it suffices to prove that any loxodromic $g\in\SO'(q,k)$ can be realized as the systole of some cocompact lattice, by the density of $\SO'(q,k)$ in $\Isom^+(\mathbb{H}^d)$. To this end, we apply the \hyperlink{Theorem-SchottkyExtension}{Main Theorem} to the cyclic group $F=\langle g\rangle$. Then, the \hyperlink{Theorem-SchottkyExtension}{Main Theorem} provides an $\ell(g)$-systolic cocompact lattice extension $\Gamma$ of $F$, as desired.
\end{proof}

\begin{remark}
    For any $n>0$, one can apply similar techniques to rank-$n$ classical Schottky groups to produce a family of closed hyperbolic $d$-manifolds whose complex systoles are dense and whose kissing numbers are each $n$ (that is, each of them has $n$ closed geodesics realizing the systole).
\end{remark}

\subsection{Obstructing arithmeticity} Complex translation lengths realized by elements in arithmetic lattices of simplest type (see Section \ref{subsec:arithmetic} for the definition) are much more restricted than for general lattices, in contrast with Corollary \ref{cor:dense-complex-systole}. Recall that a Salem number $\lambda$ is a real algebraic integer greater than 1 which is Galois conjugate to $\lambda^{-1}$, and all of whose Galois conjugates different from $\lambda^{-1}$ lie on the unit circle. Work of Emery, Ratcliffe, and Tschantz \cite{Emery2015SalemNA} shows, among other things, that if $\Gamma$ is arithmetic of simplest type and $g\in\Gamma$ is loxodromic, then $e^{\ell(g)}$ is either a Salem number or a square root of one, with the latter only occuring in odd dimensions. The next proposition, to be proven in Section \ref{subsec:construction}, shows that the complex translation lengths of loxodromics in arithmetic lattices of simplest type are additionally restricted. Compare with a recent work of Chu-Murillo \cite{ChuMurillo} which shows that there are infinitely many commensurability classes of arithmetic lattices of simplest type containing a loxodromic element with translation length $\log(\lambda)$.

\begin{proposition}\label{prop:arithmetic}
For any Salem number $\lambda$ and dimension $d\geq3$, there is a finite set of possible holonomies of loxodromic elements with translation length $\log(\lambda)$ in arithmetic lattices of simplest type in $\mathrm{Isom}^+(\mathbb{H}^d)$.
\end{proposition}

\begin{remark}
Let $\mathcal{S}$ denote the set of Salem numbers. The Salem conjecture of Lehmer \cite{Lehmer} states that $\inf\mathcal{S}>1$. A strengthening of this conjecture due to Boyd \cite{Boyd} says that the set of accumulation points of $\mathcal{S}$ is exactly the set of Pisot numbers (see also \cite[p. 30-31]{SalemNo}). In particular, Proposition \ref{prop:arithmetic} implies that if the Boyd-Salem conjecture holds, then for each $d\geq2$, the set of all complex translation lengths realized by hyperbolic elements in arithmetic $d$-manifolds of simplest type is discrete, in stark contrast with the result in Corollary \ref{cor:dense-complex-systole} for general lattices.
\end{remark}
\par

Proposition \ref{prop:arithmetic} allows us to justify non-arithmeticity of some manifolds produced by the \hyperlink{Theorem-SchottkyExtension}{Main Theorem}. Note that the constructions in \cite{Agol}, \cite{BelolipetskyThomson}, \cite{DoubaHuang} as well as the \hyperlink{Theorem-SchottkyExtension}{Main Theorem} here are already known to produce non-arithmetic lattices, whose non-arithmeticity is guaranteed by short geodesics (see the remark following Conjecture 10.12 in \cite{Gelander04}). Our method in the following corollary, however, is not dependent on containing short geodesics.

\begin{corollary}\label{cor:non-arithmetic}
    For any Salem number $\lambda$ and dimension $d\geq3$, there exists a non-arithmetic hyperbolic $d$-manifold whose systole is $\log(\lambda)$.
\end{corollary}

The manifolds constructed in Corollary \ref{cor:non-arithmetic}, which will be proven in Section \ref{subsec:construction}, are quasi-arithmetic (see Remark \ref{Remark:Subgroup}).\par

In Section \ref{sec:background}, we will provide necessary background information. Section \ref{sec:main-proof} will be devoted to the proof of the \hyperlink{Theorem-SchottkyExtension}{Main Theorem}. Finally, Section \ref{sec:applications} will focus on the remaining proofs of the corollaries.


\section{Background and notations}\label{sec:background}

\subsection{Hyperboloid models for hyperbolic $d$-space}\label{subsec:hyperboloids}
Let $q_0(x_0,\dots,x_{d})=x_0^2+\dots +x_{d-1}^2-x_{d}^2$ be the standard Lorentzian quadratic form. The set
\[
\mathbb{H}^d:=\{x\in\R^{d+1}\;|\;q_0(x)=-1\;\;\text{and}\;\;x_{d}>0\}
\]
is the hyperboloid model for $d$-dimensional hyperbolic space. The orientation preserving isometry group $\Isom^+(\mathbb{H}^{d})$ can be identified with the group
\[
\SO'(d,1):=\{A\in\mathrm{SL}(d+1,\R)\;|\;A^tJA=J\;\; \text{and} \;\; A(\mathbb{H}^d)=\mathbb{H}^d\}
\]
where $J:=\mathrm{diag}(1,\dots,1,-1)$ is the coefficient matrix of $q_0$.

For a real quadratic form $q$ in $d+1$ variables of signature $(d,1)$, let $Q\in\mathrm{GL}(d+1,\R)$ be the associated symmetric coefficient matrix. For any subring $R$ of $\R$, the special orthogonal group of $q$ over $R$ is defined by
\[
\SO(q,R):=\{A\in\text{SL}(d+1,R)\:|\;A^tQA=Q\}.
\]
and we let $\SO'(q,R)$ be the index-2 subgroup of $\SO(q,R)$ preserving the two components of $\{x\in\R^{d+1}\;|\;q(x)=-1\}$.
Since $q$ has signature $(d,1)$, there exists $M\in\mathrm{GL}(d+1,\R)$ so that $q(Mx)=q_0(x)$ for all $x\in\R^{d+1}$, and we have $\SO'(q,R)=M\SO'(q_0,R)M^{-1}$. Since $\SO'(q_0,\R)=\SO'(d,1)$, we will identify $\SO'(q,\R)$ with $\Isom^+(\mathbb{H}^d)$.

\subsection{Arithmetic lattices of simplest type}\label{subsec:arithmetic}
Let $k$ be a totally real number field, and let $q$ be a quadratic form in $d+1$ variables with coefficients in $k$. Such a pair $(k,q)$ is \textbf{admissible} if $q$ has signature $(d,1)$ at the identity embedding of $k$, and $q^\sigma$ is positive definite for any other real embedding $\sigma$ of $k$. Let $\mathcal{O}_k$ be the ring of integers of $k$. By the Borel--Harish-Chandra theorem \cite{BorelHarishChandra}, if $(k,q)$ is admissible, then $\SO'(q,\mathcal{O}_k)$ is a lattice in $\Isom^+(\mathbb{H}^d)$, and it is cocompact if and only if $q$ is anisotropic (i.e. $q(x)\neq 0$ for all nonzero $x\in k^{d+1})$. Note that for admissible $(k,q)$, if $k\neq\Q$ then $q$ is always anisotropic; if $k=\Q$, then $q$ is isotropic for $d\geq4$ by Meyer's theorem \cite[pg. 43]{Serre}. A discrete subgroup $\Gamma\leq\Isom(\mathbb{H}^d)$ is said to be \textbf{arithmetic of simplest type} if a conjugate of $\Gamma$ is commensurable with $\SO'(q,\mathcal{O}_k)$ for some admissible $(k,q)$; if this conjugate is furthermore contained in $\SO'(q,k)$, then $\Gamma$ is a \textbf{classical} arithmetic lattice. A hyperbolic $d$-orbifold is said to be arithmetic of simplest type if it is the quotient by an arithmetic subgroup of simplest type.

\subsection{Classical Schottky groups}\label{subsec:Schottky} 
We say subsets $U,V\subseteq\mathbb{H}^d$ are \textbf{strongly disjoint} if their closures in $\mathbb{H}^d\cup S^{d-1}_\infty$ are disjoint. A subgroup $F\leq\Isom^+(\mathbb{H}^d)$ is called a \textbf{classical Schottky group} if $F$ has a finite generating set $g_1,\dots,g_m$ so that there is a collection of pairwise strongly disjoint closed hyperbolic half-spaces $\{A_{-1},A_1,\dots,A_{-m},A_m\}$ with $g_i(A_{-i})=\overline{\mathbb{H}^d - A_i}$ for each $i>0$. Such a generating set is called a \textbf{standard generating set} for $F$, and is always a free generating set by the ping-pong lemma.\par


\section{Proof of the Main Theorem} 
\label{sec:main-proof}

\subsection*{Notations} In what follows, $\Gamma_n$ will denote a discrete subgroup in $\Isom^+(\mathbb{H}^d)$ where $n$ is a positive integer, and $M_n$ will denote the quotient $\Gamma_n\backslash\mathbb{H}^d$. For a metric space $Y$, when there is a designated embedding to $M_n$, usually coming from a covering map or a totally geodesic embedding, we use $Y^{(n)}$ to denote its image in $M_n$. For a subset $U$ of a metric space $Y$ and $r>0$, we use $N_r(U)$ to denote the closure of the $r$-neighborhood of $U$ in $Y$.

\begin{proof}[Proof of the \hyperlink{Theorem-SchottkyExtension}{Main Theorem}]

The proof is directly inspired by the aforementioned work by Agol, and can be summarized as follows (see also Figure \ref{fig:main-proof} for a schematic picture of the construction). Set $\Gamma_1$ to be the arithmetic lattice $\SO'(q,\mathcal{O}_k)$. First, various subgroup separability properties are used in Claims \ref{claim:1} and \ref{claim:2} to produce a finite index subgroup $\Gamma_3\leq\Gamma_1$ satisfying desirable properties. In particular, if $F$ has rank $m$, then $M_3$ contains $m$ pairs of embedded isometric totally geodesic 2-sided hypersurfaces, and the isometry between the two hypersurfaces in each pair is induced by a generator in a standard generating set of $F$. Then we cut $M_3$ along these hypersurfaces and glue back with a different pairing, using the isometries. Algebraically, this corresponds to applying the Klein-Maskit combination theorems, and each generator in a standard generating set for $F$ is contained in the fundamental group of the glued manifold. Finally, we prove that this extension is $D$-systolic using the properties ensured by Claims \ref{claim:1} and \ref{claim:2}.

We now present the proof in full detail. As in the statement of the theorem, let $F\leq\SO'(q,k)$ be a classical Schottky group and fix $D>0$. Fix a standard generating set $\{g_1,\dots,g_m\}$ for $F$, and let $A_{-1}, A_1,\dots,A_{-m},A_m$ be the associated pairwise strongly disjoint closed hyperbolic half-spaces so that $g_i(A_{-i})=\overline{\mathbb{H}^d- A_i}$ for each positive $i\in I$, where $I:=\{\pm1,\dots,\pm m\}$ is the index set. Let $P_i$ be the boundary hyperplane of $A_i$. By perturbing each $A_i$ we can assume that all the half spaces are defined over $k$, i.e. $P_i=(v_i)^\perp$ for some $v_i\in k^{d+1}$.

Define \[X:=\bigcup_{\substack{i,j\in I\\i\neq j}}P_i\cap N_D(P_j)\] so that $X$ is the set of points in $\cup_iP_i$ that are within distance $D$ of a different hyperplane in $\cup_iP_i$. Let $\Lambda_F$ be the limit set of $F$ and let $\CH(\Lambda_F)$ denote the convex hull of $\Lambda_F$. The set $X$ is compact since the planes $P_i$ are strongly disjoint, so there exists $R>0$ such that $X$ is contained in $N_R\big(\CH(\Lambda_F)\big)$. The set
    \[
    \mathrm{FD}:=\mathbb{H}^d-\bigcup_{i\in I}\text{int}(A_i)
    \]
is a fundamental domain for the $F$-action on $\mathbb{H}^d$,
and the set
    \[
    C_1:=N_R\big(\CH(\Lambda_F)\big)\cap \mathrm{FD}
    \]
is a compact fundamental domain for the $F$-action on $N_R(\CH(\Lambda_F))$.

Recall that we have defined \[\Gamma_1:=\SO'(q,\mathcal{O}_k).\] Since $P_i$ is defined over $k$, the stabilizer subgroup $\stab_{\Gamma_1}(P_i)$ is a lattice in $\Isom(P_i)$ by the Borel--Harish-Chandra theorem \cite{BorelHarishChandra}. 
    
\begin{claim}\label{claim:1}
    There exists a finite index subgroup $\Gamma_2\leq\Gamma_1$ with the following properties:
    \begin{itemize}
    \item[(A1)] $\Gamma_2$ is torsion-free;
    \item[(A2)] any loxodromic element of $\Gamma_2$ has translation length $>D$;
    \item[(A3)] $C_1$ embeds into $M_1$ under the covering map $\mathbb{H}^d\rightarrow M_1$;
    \item[(A4)] for each $i\in I$, the subgroup $H_i:=\stab_{\Gamma_2}(P_i)$ preserves the two sides of $P_i$;
    \item[(A5)]  $H:=\langle H_{-1},H_1,\dots,H_{-m},H_m\rangle$ is geometrically finite and decomposes as $H_{-1}*H_1*\dots*H_{-m}*H_m$;
    \item[(A6)] for any $i,j\in I$, if an $H$ translation of $P_i$ is within distance $D$ of $P_j$, then the translation is in the $H_j$ orbit of $P_i$.
    \end{itemize}
\end{claim}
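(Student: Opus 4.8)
\emph{Proof idea.}
The plan is to obtain $\Gamma_2$ as a finite intersection of finite-index subgroups of $\Gamma_1$, one group tailored to each of (A1)--(A6). The tools are Selberg's lemma, the residual finiteness of $\Gamma_1$, and the separability of geometrically finite subgroups of $\Gamma_1$ --- the last of which holds because $\Gamma_1=\SO'(q,\mathcal{O}_k)$ is virtually compact special by Bergeron--Haglund--Wise \cite{BHW}, so in particular each hyperplane stabilizer $\stab_{\Gamma_1}(P_i)$ (a lattice in $\Isom(P_i)$, hence geometrically finite in $\Gamma_1$) is separable, as are the relevant double cosets. Treating (A1)--(A6) one at a time is legitimate because each condition is inherited by any further finite-index subgroup: for (A1)--(A4) this is immediate, and for (A5)--(A6) one uses that $\stab_{\Gamma''}(P_i)$ has finite index in $\stab_{\Gamma'}(P_i)$, that a subgroup of a free product generated by one subgroup of each free factor is again their free product (Kurosh), that finite-index subgroups of geometrically finite groups are geometrically finite, and that the precise-invariance hypotheses underlying the combination theorem for the $H_i$ descend to subgroups $H_i''\le H_i$.

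Conditions (A1)--(A4) would be handled by standard arguments. (A1) is Selberg's lemma. For (A2): the finite-volume manifold $M_1$ has only finitely many closed geodesics of length $\le D$, hence $\Gamma_1$ has finitely many conjugacy classes of loxodromic elements of translation length $\le D$; choosing by residual finiteness a finite quotient in which a representative of each survives, and intersecting the kernels, yields a subgroup with no such element, since surviving in a finite quotient is conjugacy-invariant. For (A3): the compact set $C_1$ meets only finitely many of its $\Gamma_1$-translates, so residual finiteness gives a finite-index subgroup in whose quotient $C_1$ embeds. For (A4): the side-preserving elements of $\stab_{\Gamma_1}(P_i)$ form the kernel $\stab^+_{\Gamma_1}(P_i)$ of a homomorphism $\stab_{\Gamma_1}(P_i)\to\Z/2$, of index $\le 2$ and geometrically finite, hence separable; if a side-swapping element $\tau_i$ exists, take a finite-index subgroup containing $\stab^+_{\Gamma_1}(P_i)$ but not $\tau_i$, which forces its intersection with $\stab_{\Gamma_1}(P_i)$ to be side-preserving. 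Intersect over $i$.

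The substance is (A5)--(A6), which I would prove together. First, the separability of the $\stab_{\Gamma_1}(P_i)$ --- together with the avoidance, via residual finiteness, of the finitely many elements causing a $\Gamma_1$-translate of some $P_j$ to cross $P_i$ or to coincide with some $P_i$ --- lets us pass to a finite-index subgroup in which each $P_i$ descends to an embedded, two-sided, pairwise disjoint totally geodesic hypersurface. Second, killing the finitely many short conjugacy classes inside each $\stab_{\Gamma_1}(P_i)$ (residual finiteness again) arranges each $H_i$ to have minimal displacement exceeding $D+\Delta$, where $\Delta$ uniformly bounds the diameters of, and distances between, the nearest-point projections to $P_k$ of the (bounded, since the $P_i$ are strongly disjoint) translates of the $P_i$'s that arise below. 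Now the $H_i$ are geometrically finite with pairwise disjoint limit sets $\partial_\infty P_i$ and large displacement, so the Klein--Maskit combination theorem applies (the required precisely-invariant configuration of pairwise disjoint neighborhoods $Y_i\supseteq P_i$ of the $P_i$ being furnished by the disjointness of the $\partial_\infty P_i$ and the displacement bound), giving that $H=H_{-1}\ast H_1\ast\cdots\ast H_{-m}\ast H_m$ is discrete and geometrically finite; this is (A5). For (A6), induct on the normal-form length of $h\in H$. If $h\in H_i=\stab_{\Gamma_2}(P_i)$ then $hP_i=P_i\in H_jP_i$ trivially; otherwise write $h=a_1\cdots a_\ell$ in normal form with $a_1\in H_{k_1}\setminus\{1\}$. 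If $k_1\ne j$, the combination-theorem regions confine $a_2\cdots a_\ell P_i$ (or $P_i$ itself when $\ell=1$) to a set with bounded projection to $P_{k_1}$, so projecting $hP_i$ to $P_{k_1}$ and using that $a_1$ has displacement $>D+\Delta$ gives $\operatorname{dist}(hP_i,P_j)>D$, contrary to hypothesis; hence $k_1=j$, and replacing $h$ by $a_1^{-1}h$ (still within $D$ of $P_j=a_1^{-1}P_j$, and shorter) lets induction conclude $hP_i\in H_jP_i$.

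The expected main obstacle is the packaging of (A5)--(A6): isolating precisely which finitely many ``bad configurations'' (crossing or coinciding translates, short elements of the hyperplane stabilizers, translates of $P_i$ accidentally within distance $D$ of $P_j$, as recorded by $X$) must be excluded, verifying they can all be excluded simultaneously by separability and residual finiteness, and checking the hypotheses of the combination theorem in this higher-dimensional setting. By comparison, (A1)--(A4) and the inheritance bookkeeping are routine.
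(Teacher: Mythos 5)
Your proposal follows essentially the same route as the paper: Selberg's lemma, residual finiteness, and separability of the side-preserving index-two subgroup of $\stab_{\Gamma_1}(P_i)$ handle (A1)--(A4), while (A5)--(A6) come from the Klein combination theorem together with a normal-form/ping-pong argument, after residual finiteness is used to make the $H_i$ sufficiently spread out. The only difference is packaging: the paper arranges the ping-pong configuration by forcing the Dirichlet domain of $H_i$ about a point of $P_i$ to contain $Q_i$ and $\bigcup_{j\neq i}N_D(P_j)$, rather than via a global minimal-displacement bound on $H_i$ (which, as literally stated, is unattainable when $H_i$ contains parabolics in the isotropic case; the correct formulation is displacement on the compact projection regions, as your constant $\Delta$ already suggests).
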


To construct $\Gamma_2$ satisfying (A5) and (A6), choose auxiliary pairwise disjoint closed hyperbolic half-spaces $B_i$ for $i\in I$ so that $A_i\subseteq B_i$ for all $i$, and the hyperplanes $Q_i:=\partial B_i$ and $P_i$ are strongly disjoint. For each $i$, fix an arbitrary $p_i\in P_i$. By the residual finiteness of $\Gamma_1$, there exists a finite index subgroup $\Gamma_2\leq\Gamma_1$ such that for $H_i:=\stab_{\Gamma_2}(P_i)$, the Dirichlet fundamental polyhedron $E_i$ about $p_i$ for $H_i$ contains $Q_i$ and $\cup_{j\neq i}N_D(P_j)$ in its interior. Hence, for each $i\neq j$, we have \begin{equation}
\label{Eqn-PingPong}
    \mathbb{H}^d-\text{int}(E_i)\subseteq B_i\subseteq\text{int}(E_j),
\end{equation}
so it follows from the Klein combination theorem \cite{Klein} that $\langle H_{-1},H_1,\dots,H_{-m},H_m\rangle$ is discrete, decomposes as $H_{-1}*H_1*\dots*H_{-m}*H_m$, and has a fundamental domain given by $\cap_i E_i$. Since each $H_i$ acts on $P_i$ with finite covolume, each $E_i$ has finitely many faces. Therefore, $\cap_i E_i$ has finitely many faces, confirming that $\langle H_1,\dots,H_n\rangle$ is geometrically finite (see \cite{BowditchGF} for a discussion of different notions of geometrically finiteness). This confirms that $\Gamma_2$ may be chosen to satisfy (A5). Additionally, one can use a ping-pong argument to confirm that $\Gamma_2$ satisfies (A6), using Equation \ref{Eqn-PingPong} and the choice that for all $i$, $\cup_{j\neq i}N_D(P_j)$ is contained in the interior of $E_i$. Up to passing to a further finite index subgroup, we can assume $\Gamma_2$ satisfies (A1) by Selberg's Lemma, (A2) and (A3) by residual finiteness, and (A4) by the fact that the index-two subgroup of $\stab_{\Gamma_1}(P_i)$ which preserves the two sides of $P_i$ is separable in $\Gamma_1$ (see \cite{LongTotGeod},\cite{BergeronSep},\cite[Lem. 1.8]{BHW}), completing the proof of Claim 1.

Since $g_i$ is contained in the commensurator $\text{Comm}(\Gamma_2)$, we observe that for each $i>0$ in $I$, the subgroups
 \[
 K_{-i}:=H_{-i}\cap g_i^{-1}H_ig_i\;\;\;\;\;\text{ and }\;\;\;\;\; K_i:=H_i\cap g_iH_{-i}g_i^{-1}=g_iK_{-i}g_i^{-1}
 \]
are of finite index in $H_{-i}$ and $H_i$, respectively. Additionally, it follows from property (A5) that the group $K:=\langle K_{-1},K_1,\dots,K_{-m},K_m\rangle$ is geometrically finite and decomposes as $K_{-1}*K_1*\dots*K_{-m}*K_m$. For each $i$ in $I$, define $\Sigma_i:=K_i\backslash P_i$. Each $\Sigma_i$ embeds into $K\backslash\mathbb{H}^d$, and the union of the $\Sigma_i$ lies in the boundary of $\mathrm{CC}(K\backslash\mathbb{H}^d)$, the convex core of $K\backslash\mathbb{H}^d$. By condition (A3) and convexity, the set $C_1$ embeds into $\mathrm{CC}(K\backslash\mathbb{H}^d)$ as a convex subset connecting all the $\Sigma_i$. Let the image be denoted by $C_1^{(K)}$.

Let $\varepsilon_d$ be the Margulis constant for dimension $d$, and fix $\varepsilon<\varepsilon_d$ such that the distance between an $\varepsilon$-thin cusp neighborhood and the boundary of an $\varepsilon_d$-thin cusp neighborhood in a hyperbolic $d$-manifold is greater than $D$ \cite[Prop. 1.4]{FuterPurcellSchleimer2019}. We define $\Sigma_i^{nc}$ to be the compact submanifold of $\Sigma_i$ obtained by removing all cuspidal components of the thin part $(\Sigma_i)_{<\varepsilon}$. Define a compact subset $C_2\subseteq K\backslash\mathbb{H}^d$ by
\[
C_2:=C_1^{(K)}\cup\big(\bigcup_{i\in I}N_D(\Sigma_i^{nc})\big)
\]
where $N_D(\Sigma_i^{nc})$ is the $D$-neighborhood of $\Sigma_i^{nc}$ in $K\backslash\mathbb{H}^d$.

\begin{claim}\label{claim:2}
There exists a finite index subgroup $\Gamma_3\leq\Gamma_2$ with the following properties:
\begin{itemize}
\item [(B1)] $\stab_{\Gamma_3}(P_i)=K_i$ for all $i\in I$;
\item [(B2)] $C_2$ embeds into $M_3$ under the covering map $K\backslash\mathbb{H}^d\rightarrow\Gamma_3\backslash\mathbb{H}^d$;
\item [(B3)] $\cup_{i\in I}\Sigma_i$ embeds into $M_3$ as a disjoint union of totally geodesic 2-sided hypersurfaces $\Sigma_i^{(3)}$.
\end{itemize}
\end{claim}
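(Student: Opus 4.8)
The proof of Claim \ref{claim:2} is a standard separability argument, and the plan is to produce $\Gamma_3$ as a finite-index subgroup of $\Gamma_2$ \emph{containing $K$}, built by intersecting finitely many finite-index subgroups of $\Gamma_2$, each chosen to destroy one of finitely many obstructions to (B1)--(B3). The only non-elementary input is that arithmetic hyperbolic lattices of simplest type have all of their geometrically finite subgroups separable \cite{BHW}; since $\Gamma_2$ has finite index in $\Gamma_1=\SO'(q,\mathcal{O}_k)$, the geometrically finite group $K$ and the groups $K_i$ (each a lattice in the hyperplane $P_i$, hence geometrically finite) are separable in $\Gamma_2$. We use repeatedly that if $L\leq\Gamma_2$ is separable and $\gamma\in\Gamma_2\setminus L$, then some finite-index subgroup of $\Gamma_2$ contains $L$ and omits $\gamma$, and that the intersection of two finite-index subgroups each containing $K$ again contains $K$, so all the steps below combine.

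We first arrange (B1). By the free-product decomposition $K=K_{-1}*\cdots*K_m$ from (A5), realized by ping-pong with the $K_i$ acting on the $P_i$, one has $\stab_K(P_i)=K_i$, i.e.\ $K\cap H_i=K_i$ (since $H_i=\stab_{\Gamma_2}(P_i)$). As $[H_i:K_i]<\infty$, choose a representative $h$ of each nontrivial coset of $K_i$ in $H_i$; then $h\notin K$, so separability of $K$ gives a finite-index subgroup of $\Gamma_2$ containing $K$ but not $h$. Intersecting over the finitely many pairs $(i,h)$ yields a finite-index $\Gamma_2'\leq\Gamma_2$ with $K\leq\Gamma_2'$ and $\Gamma_2'\cap H_i=K_i$ for all $i\in I$. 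To arrange (B2) as well, lift the compact set $C_2\subseteq K\backslash\mathbb{H}^d$ to a compact set of representatives $\widetilde{C_2}\subseteq\mathbb{H}^d$; by discreteness of $\Gamma_2$ only finitely many $\gamma\in\Gamma_2\setminus K$ satisfy $\gamma\widetilde{C_2}\cap\widetilde{C_2}\neq\varnothing$, and separating each of these from $K$ and intersecting with $\Gamma_2'$ produces the desired $\Gamma_3$: it is finite-index in $\Gamma_2$, contains $K$, still satisfies $\stab_{\Gamma_3}(P_i)=\Gamma_3\cap H_i=K_i$ (hence (B1)), and no $\gamma\in\Gamma_3\setminus K$ satisfies $\gamma\widetilde{C_2}\cap\widetilde{C_2}\neq\varnothing$, which says exactly that the covering map $K\backslash\mathbb{H}^d\to M_3$ restricts to an embedding on $C_2$, giving (B2).

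It remains to verify (B3) for this $\Gamma_3$. Two-sidedness of each $\Sigma_i^{(3)}$ is immediate from (B1) and (A4), since $\stab_{\Gamma_3}(P_i)=K_i\leq H_i$ preserves the two sides of $P_i$. For embeddedness of the $\Sigma_i$ and their pairwise disjointness it suffices to show that the only $\Gamma_3$-translate of $P_i$ meeting $P_j$ is $P_i$ when $i=j$, and that no $\Gamma_3$-translate of $P_i$ meets $P_j$ when $i\neq j$. When $q$ is anisotropic each $\Sigma_i=\Sigma_i^{nc}$ is compact, and $C_2\supseteq C_1^{(K)}\cup\bigcup_iN_D(\Sigma_i)$ contains the disjoint boundary components $\Sigma_i$ of $\mathrm{CC}(K\backslash\mathbb{H}^d)$ together with a connecting set, so an intersection among the $\Sigma_i^{(3)}$, or a self-intersection of one of them, would give a point of $C_2$ with two preimages in $C_2$ under $K\backslash\mathbb{H}^d\to M_3$, contradicting (B2). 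When $q$ is isotropic the $\Sigma_i$ have finitely many cuspidal ends, and here the calibration $\varepsilon<\varepsilon_d$ provided by \cite[Prop.\ 1.4]{FuterPurcellSchleimer2019} enters: each cuspidal end of $\Sigma_i$ lies in the $\varepsilon_d$-thin cusp part of $K\backslash\mathbb{H}^d$, whose image in $M_3$ stays $\varepsilon_d$-thin, and inside such a cusp region a totally geodesic slice through the cusp point is automatically embedded and remains more than $D$ from the other hyperplane images; combining this with the thick-part argument above, confined to the $D$-neighbourhood of $\Sigma_i^{nc}\subseteq C_2$, reduces (B3) to (B2).

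The one genuinely deep ingredient is the separability of geometrically finite subgroups of $\Gamma_1$, quoted from \cite{BHW}; granting that, the only delicate bookkeeping is arranging that the single compact set $C_2$ forces all of (B3) even though the $\Sigma_i$ may be non-compact, which is exactly why $\Sigma_i^{nc}$ and the constant $\varepsilon$ were introduced before the statement of the claim. I expect this cusp analysis to be the main obstacle to writing out carefully; the rest is the routine "separate out finitely many obstructions" procedure.
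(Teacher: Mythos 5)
Your proposal is correct and follows essentially the same route as the paper: separability of the geometrically finite subgroup $K$ in $\Gamma_2$ (Bergeron--Haglund--Wise) combined with Scott's criterion to embed the compact set $C_2$, from which (B1) and (B3) are deduced using the cusp product structure and (A4). The only difference is cosmetic: you arrange (B1) by explicitly separating coset representatives of $K_i$ in $H_i$ from $K$, whereas the paper obtains (B1) as a consequence of the embedding of the $\Sigma_i$; both are valid.
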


Since $K$ is geometrically finite, it is separable in $\Gamma_2$ by the result of Bergeron, Haglund, and Wise \cite{BHW} (see also \cite{Wise2021}*{Thm. 15.13} and \cite{BergeronWise}*{Thm. 6.2} for the non-cocompact case). Hence, a characterization of separability due to Scott \cite{Scott} implies that there exists a finite index subgroup $\Gamma_3\leq\Gamma_2$ containing $K$ so that $C_2$ embeds into $M_3$ under the covering map, satisfying (B2). In particular, the union of the boundaries of $\Sigma_i^{nc}$ embeds into $M_3$. Therefore, the disjoint union $\sqcup_i \Sigma_i$ also embeds under the covering map $K\backslash\mathbb{H}^d\rightarrow M_3$, by the product structure of the cusps of hyperbolic manifolds and the fact that each $\Sigma_i$ is totally geodesic. That each image $\Sigma_i^{(3)}\subseteq M_3$ is 2-sided folows from (A4). Therefore, conditions (B1) and (B3) are also satisfied, finishing the proof of Claim 2.

\begin{figure}[htb]
\centering
\labellist
\pinlabel $\mathbb{H}^d$ at 876 1342
\pinlabel $K\backslash\mathbb{H}^d$ at 443 1070
\pinlabel $C_1$ at 856 1627
\pinlabel $F\backslash\mathbb{H}^d$ at 1450 1356
\pinlabel $M_3$ at 260 43
\pinlabel $M'$ at 790 43
\pinlabel $M_6$ at 1433 43
\pinlabel cut at 486 609
\pinlabel glue at 1075 609
\pinlabel $I(\Sigma_i)$ at 668 991
\pinlabel $O(\Sigma_i)$ at 892 991
\endlabellist
\includegraphics[scale=.2]{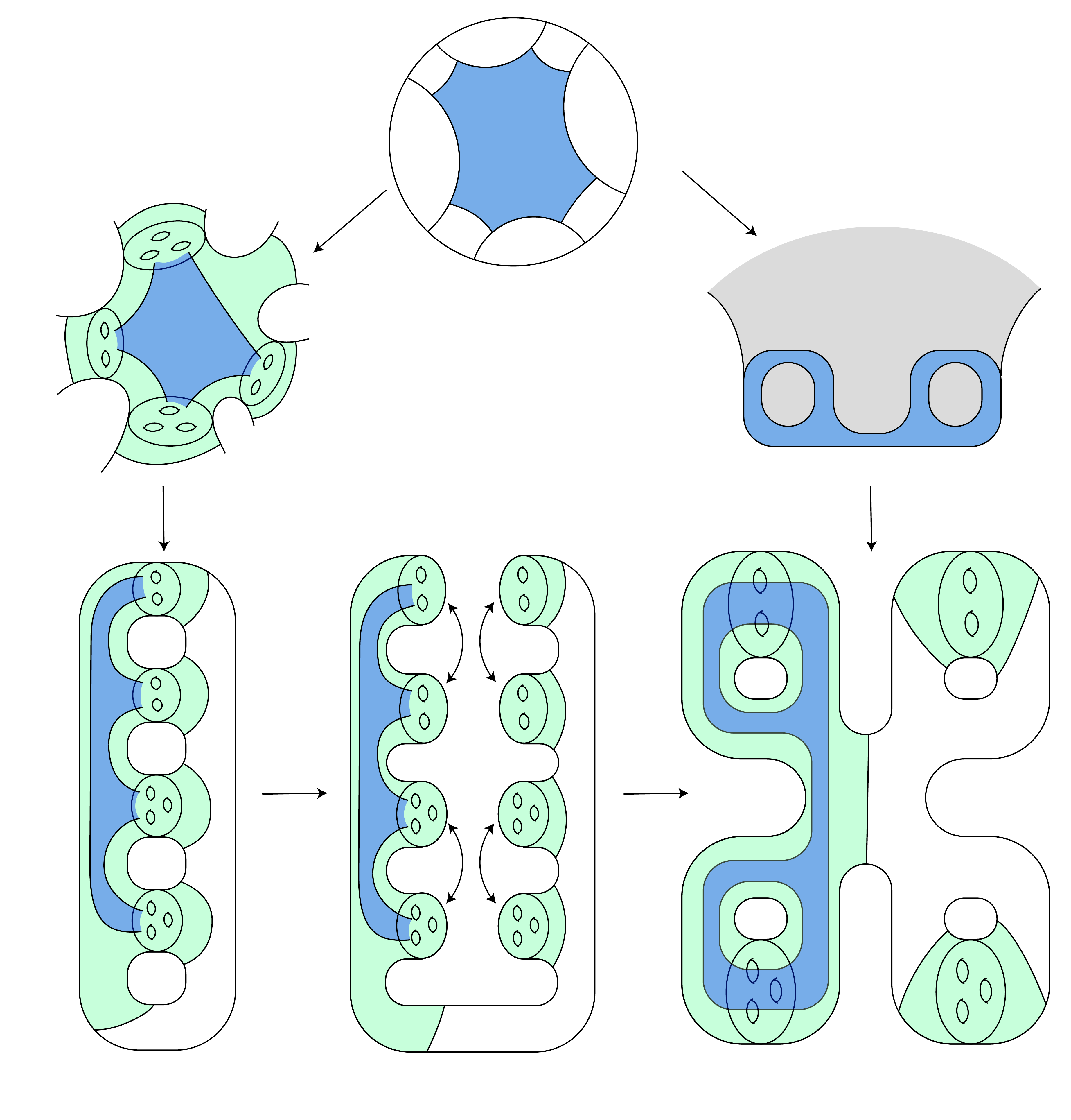}
\caption{The compact set $C_1$ and its images are shown in blue, and the compact set $C_2$ and its images are shown in green. Note that the image of $C_1$ in $F\backslash\mathbb{H}^d$ is $N_R\big(\mathrm{CC}(F\backslash\mathbb{H}^d)\big)$. All the arrows without labels are covering maps.}
\label{fig:main-proof}
\end{figure}

For depictions of the steps that follow, we refer the readers to Figure \ref{fig:main-proof}. Construct a finite volume (possibly disconnected) hyperbolic $d$-manifold $M'$ with totally geodesic boundary by slicing $M_3$ along each embedded hypersurface $\Sigma_i^{(3)}$. Let $M$ be the component of $M'$ containing the embedded image $C_1^{(3)}$ of $C_1$. Each $\Sigma_i^{(3)}$ gives rise to two components of $\partial M'$. We denote the component adjacent to $C_1^{(3)}$ by $I(\Sigma_i)$ and the other one by $O(\Sigma_i)$. The manifold $M$ corresponds to a subgroup $\Gamma_4\leq\Gamma_3$ isomorphic to $\pi_1(M)$. In fact, $M$ is the convex core of $M_4$.
Additionally, we have $\stab_{\Gamma_4}(P_i)=K_i$ for all $i$; since $K_i=g_iK_{-i}g_i^{-1}$ for each $i>0$, it then follows from the second Klein-Maskit combination theorem (see, e.g., \cite{AbikoffMaskit} and \cite{LiOhshikaWang}) that the group $\Gamma_5:=\langle\Gamma_4,g_1,\dots,g_m\rangle$ is discrete and torsion-free. Geometrically, $\mathrm{CC}(M_5)$ is obtained from $M$ by gluing $I(\Sigma_{-i})$ to (the isometric) $I(\Sigma_i)$ using the isometry descending from the action of $g_i$ on $\mathbb{H}^d$, for each $i>0$. 

While $\Gamma_5$ is a discrete extension of $F$, it need not be a lattice: it is possible that $M$ contains a component of $O(\Sigma_i)$, in which case $M_5$ has infinite volume. To obtain a finite volume manifold without boundary, we glue the remaining boundary components of $M_5$ with the boundaries of other components in $M'$. More precisely, we glue $O(\Sigma_{-i})$ and $O(\Sigma_i)$ by the composition
\[
O(\Sigma_{-i})\to I(\Sigma_{-i})\to I(\Sigma_{i})\to O(\Sigma_i)
\]
where the first and the third maps are canonical isometries coming from cutting, and the second map is the isometry induced by $g_i$. The resulting manifold is of finite-volume, orientable, and without boundary (but possibly disconnected). Let $M_6$ be the component containing $M$ and let $\Gamma_6$ be the Kleinian group containing $\Gamma_5$ associated to $M_6$. \par

It remains to prove that $\Gamma:=\Gamma_6$ is a $D$-systolic extension of $F$. To see this, take any closed geodesic $\alpha$ of length $\leq D$ in $M_6$. Since the systole of $\Gamma_3$ is greater than $D$ by (A2), $\alpha$ must intersect the hypersurfaces along which we did the cutting and gluing. Further, the choice of $\varepsilon$ implies that $\alpha$ cannot intersect any $\varepsilon$-thin cusp neighborhood of $M_6$. By undoing the gluing, $\alpha$ is cut into a union $\beta$ of properly immersed geodesic arcs in $M'$ with length at most $D$. As no component of $\beta$ intersects an $\varepsilon$-thin cusp neighborhood, the choice of $C_2$ and condition (B2) imply that the components of $\beta$ must have endpoints on $\cup I(\Sigma_i)$. By gluing $M'$ back to $M_3$, $\beta$ becomes a union of arcs $\beta_3\subseteq M_3$ with endpoints in $\cup(\Sigma_i^{nc})^{(3)}$ on the side of the image of $C_2$. Since we choose $C_2$ to contain a $D$-neighborhood of each $\Sigma_i^{nc}$, condition (B2) implies that any component $\gamma$ of $\beta_3$ must have a lift to $K\backslash\mathbb{H}^d$ with endpoints on $\cup\Sigma_i^{nc}$, which has a further lift $\wt{\gamma}$ to $\mathbb{H}^d$ with endpoints on $\cup P_i$, by (A6). But $\wt{\gamma}$ has length at most $D$, so our choice of $X$ and $C_1$ implies it must lie in $C_1$. The projection of $C_1$ into $M_6$ factors through the projection into $F\backslash\mathbb{H}^d$ as $N_R(\mathrm{CC}(F\backslash\mathbb{H}^d))$, and the latter embeds into $M_6$. Therefore, $\alpha$ is covered by a closed geodesic in $N_R(\mathrm{CC}(F\backslash\mathbb{H}^d))$, which means that $\alpha$ is conjugate to an element in $F$.

For the last cocompactness statement, by \cite{BorelHarishChandra} (see also Section \ref{subsec:arithmetic}) it suffices to observe that $\Gamma$ is cocompact if and only if $\Gamma_1$ is.
\end{proof}

\begin{remark}
\label{Remark:Subgroup}
    Although the algebraic description of gluing $O(\Sigma_i)$'s is not as explicit as gluing $I(\Sigma_i)$'s, it can be seen by interpreting each gluing in the construction in terms of the Klein-Maskit combination theorems that $\Gamma$ is contained in the group generated by $\Gamma_1$ and $F$. In particular, $\Gamma$ is a subgroup of $\SO'(q,k)$. Note that this immediately implies that $\Gamma$ is quasi-arithmetic in the sense of Vinberg (cf. \cite{Thomson2016}). 
\end{remark}

\begin{remark}
\label{Remark:ConvexCoreNeighborhood}
    In the proof of the \hyperlink{Theorem-SchottkyExtension}{Main Theorem}, we see that the $R$-neighborhood of the convex core $\mathrm{CC}(F\backslash\mathbb{H}^d)$ embeds into the finite volume manifold $\Gamma\backslash\mathbb{H}^d$ under the covering map. Notice that we are free to choose $R$ as large as we would like in the process of producing an extension.
\end{remark}

\begin{remark}
\label{Remark:Cubulation}
    When $\SO'(q,\mathcal{O}_k)$ is cocompact, the lattice $\Gamma$ we construct is cubulable, i.e. $\Gamma$ admits a proper cocompact action on a CAT(0) cube complex. This is because $\SO'(q,\mathcal{O}_k)$ is hyperbolic and cubulable by \cite{BHW}, so the quasiconvex subgroups of $\SO'(q,\mathcal{O}_k)$ are also cubulable by \cite{HuglandConvex}. The remark now follows from Agol's virtually special theorem \cite{AgolVHC}, Wise's quasiconvex hierarchy theorem \cite{Wise2021} (see also \cite{AgolGrovesManning}) and the fact that $\Gamma$ is obtained as combinations of $M'$ along quasiconvex subgroups.
\end{remark}


\section{Applications}
\label{sec:applications}

\subsection{An application to the Chabauty topology}

\label{sec:Chabautyapplications}

Before proving Corollary \ref{Corollary:chabauty}, we briefly recall that a sequence $\{\Gamma_n\}$ of discrete torsion-free subgroups of $\Isom^+(\mathbb{H}^d)$ converges to $\Gamma$ in the Chabauty topology on $\mathcal{D}_d$ if and only if 
\begin{enumerate}
    \item every accumulation point $g\in\Isom^+(\mathbb{H}^d)$ of a sequence $g_n\in\Gamma_n$ is contained in $\Gamma$, and
    \item for all $g\in \Gamma$, there exists a sequence $g_n\in\Gamma_n$ so that $g_n\rightarrow g$ in $\Isom^+(\mathbb{H}^d)$.
\end{enumerate}
We refer the readers to \cite[Ch. E]{BenedettiPetronio} for background on the Chabauty topology.

\begin{proof}[Proof of Corollary \ref{Corollary:chabauty}]
    Fix an admissible pair $(k,q)$ with anisotropic $q$ of signature $(d,1)$. Identifying $\Isom^+(\mathbb{H}^d)$ with $\SO'(q,\R)$, choose $\rho\in\mathcal{S}_{d,m}$ whose image is $F$, where $m$ is the rank of $F$. As is done in Corollary \ref{Corollary:ExtandableSchottky}, we may choose a sequence $\{\rho_n\}\subseteq\mathcal{S}_{d,m}$ which algebraically converges to $\rho$ so that each $\rho_n(\mathbb{F}_m)\leq\SO'(q,k)$. Set $F_n:=\rho_n(\mathbb{F}_m)$ and note that $F_n\rightarrow F$ in $\mathcal{D}_d$, since the algebraic topology on $\mathcal{S}_{d,m}$ is equivalent to the strong topology \cite[Cor. 4.3]{McMullen}. For each $n$, the \hyperlink{Theorem-SchottkyExtension}{Main Theorem} and Remark \ref{Remark:ConvexCoreNeighborhood} provide a cocompact lattice extension $\Gamma_n$ of $F_n$ so that if $h\in\Gamma_n$ satisfies \[h\cdot N_n(\mathrm{CH}(\Lambda_{F_n}))\cap N_n(\mathrm{CH}(\Lambda_{F_n}))\neq\varnothing,\] then $h\in F_n$.\par

    Since $\rho_n\rightarrow\rho$ strongly, it follows from a result of McMullen \cite{McMullen} that $\Lambda_{F_n}\rightarrow\Lambda_F$ in the Hausdorff topology; a result of Bowditch \cite{BowditchCHs} on convex hulls then implies that $\mathrm{CH}(\Lambda_{F_n})\rightarrow\mathrm{CH}(\Lambda_F)$ in the Chabauty topology on the set of closed subsets of $\mathbb{H}^d$. It follows that there exists $p\in\mathbb{H}^d$ so that $p\in N_1(\mathrm{CH}(F_n))$ for $n$ sufficiently large.\par

    Suppose that after passing to a subsequence, there exist elements $h_n\in\Gamma_n$ and $h\in\SO'(q,\R)$ so that $h_n\rightarrow h$ in $\SO'(q,\R)$. To confirm that $\Gamma_n\rightarrow F$ in $\mathcal{D}_d$, it suffices to show that $h\in F$, which will confirm the first condition for convergence in the Chabauty topology discussed above (the second condition being immediate, since $F_n\leq\Gamma_n$ for all $n$). Since $h_n\cdot p\rightarrow h\cdot p$ in $\mathbb{H}^d$, we see that $h_n\cdot p\in N_n(\mathrm{CH}(\Lambda_{F_n}))$ for $n$ sufficiently large. Thus, it follows from the choice of the extensions $\Gamma_n$ that $h_n\in F_n$ for $n$ sufficiently large. Since $F_n\rightarrow F$ in $\mathcal{D}_n$, we conclude that $h\in F$, as desired.\par

    For the last statement of the corollary, note that if $d\geq3$, then for each $n$, \[\{g\Gamma_ng^{-1}\;|\;g\in\SO'(q,\R)\}\] is a connected component of $\mathcal{D}_d$ (see \cite[Thm. E.2.4]{BenedettiPetronio} and \cite[Thm. A]{Zevenbergen}). Therefore, we have shown that $F$ is approximated in $\mathcal{D}_d$ by elements which lie in different connected components, so we see that $\mathcal{D}_d$ is not locally connected at $F$.
\end{proof}


\subsection{Applications to complex translation lengths in lattices}

\label{sec:systoleapplications}

We start by providing a proof for Corollary \ref{cor:irrational-holonomy}, in which we construct closed hyperbolic $3$-manifolds containing an arbitrarily short geodesic whose holonomy has infinite order.

\begin{proof}[Proof of Corollary \ref{cor:irrational-holonomy}]
    Fix $\varepsilon>0$ and $d\geq3$. Let $k=\Q(\sqrt2)$ and $q(x_0,\dots,x_d)=x_0^2+\dots+x_{d-1}^d-\sqrt{2}x_d^2$, and note that $(k,q)$ is admissible, with $q$ anisotropic. We will construct a loxodromic isometry $g\in\SO'(q,k)$ so that the complex translation length of $g$ is $(\ell(g),[A])$, where $\ell(g)<\varepsilon$ and $A\in\SO(d-1)$ has infinite order. Then, we may apply the \hyperlink{Theorem-SchottkyExtension}{Main Theorem} to construct an $\varepsilon$-systolic lattice extension of $\langle g\rangle$, which will establish the corollary.\par

    To construct $g$, we first choose $A\in\SO(d-1)$ with infinite order and coefficients in $k$. For example, we may let 
    \[R=\begin{pmatrix}
        3/5 & -4/5\\
        4/5  & 3/5
    \end{pmatrix}
    \hspace{.4in}\text{and} \hspace{.4in}
    A=\begin{pmatrix}
        R & \\
          & I_{d-3}
    \end{pmatrix}\]
    where $I_{n}$ is the $n\times n$ identity matrix (which is empty if $n=0$). Set $q'(x,y)=x^2-\sqrt{2}y^2$. By the density of $\SO'(q',k)$ in $\SO'(q',\R)=\Isom^+(\mathbb{H}^1)$, we may choose $T\in\SO'(q',k)$ with translation length $\ell(T)$ less than $\varepsilon$.\par

    Now, we define 
    \begin{equation*}
    \label{eqn:InfiniteOrderHolonomy}
        g:=\begin{pmatrix}
        A& \\
         & T
    \end{pmatrix}=
    \begin{pmatrix}
        A & \\
        & I_{2}
    \end{pmatrix}\cdot
    \begin{pmatrix}
        I_{d-1} & \\
                & T
    \end{pmatrix}
    \end{equation*}
    and observe that $g\in\SO'(q,k)$ is loxodromic with complex translation length $(\ell(T),[A])$. 
\end{proof}

\subsection{A construction of non-arithmetic lattices}\label{subsec:construction}

We will first prove Proposition \ref{prop:arithmetic}, establishing the finiteness of possible holonomies of loxodromics which have a fixed translation length $\log(\lambda)$, where $\lambda$ is a Salem number, in arithmetic $d$-manifolds of simplest type.

\begin{proof}[Proof of Proposition \ref{prop:arithmetic}]

Recall that $\lambda>1$ is a Salem number and $d\geq 3$. Let $q$ be an admissible quadratic form on $d+1$ variables defined over a totally real number field $k$, and let $\Gamma$ be a lattice commensurable to $\SO'(q,\mathcal{O}_k)$ containing a hyperbolic element $W$ with $\ell(W)=\log(\lambda)$. By Lemmas 4.2 and 4.5 of \cite{Emery2015SalemNA}, $W^2$ is contained in a classical arithmetic lattice in $\SO'(q,k)$. Since powers of a Salem number are Salem numbers and each loxodromic isometry has only finitely many square roots in $\Isom^+(\mathbb{H}^d)$, it suffices to assume $\Gamma$ is classical (see Section \ref{subsec:arithmetic}) and contained in $\SO'(q,k)$. \par

Let $p(x)$ be the characteristic polynomial of $W$, and note that $p(x)$ has coefficients in $\mathcal{O}_k$, by \cite[Lem. 3.1]{Emery2015SalemNA}. Since $W$ is hyperbolic with translation length $\log(\lambda)$, the roots of $p(x)$ consist of the simple real roots $\lambda$ and $\lambda^{-1}$, pairs of conjugate complex roots on the unit circle encoding the holonomy of $W$, and possibly simple or multiple roots $\pm1$. By \cite[Thm. 5.2]{Emery2015SalemNA}, $k$ is a subfield of $\Q(\lambda+\lambda^{-1})$. Since there are only finitely many such $k$ for each $\lambda$, it suffices to show that there are only finitely many possibilities for the polynomial $p(x)$, for fixed $\lambda$, $d$, and $k$.\par

Let $p_1(x)\in\mathcal{O}_k[x]$ be the minimal polynomial of $\lambda$ over $k$, and define $p_2(x)$ to be $p(x)/p_1(x)\in\mathcal{O}_k[x]$. The proof of \cite[Thm. 5.2]{Emery2015SalemNA} shows that $\lambda^{-1}$ is also a root of $p_1(x)$, so all roots of $p_2(x)$ have absolute value 1. Moreover, for any embedding $\sigma:k\to\R$ that is not the identity embedding, $W^\sigma$ lies in the compact group $\SO(q^\sigma,\R)$. The polynomial $p_2^{\sigma}(x)$ divides the characteristic polynomial of $W^\sigma$, so all roots of $p_2^{\sigma}(x)$ have absolute value $1$ as well. This shows that the coefficients of $p_2(x)$ are elements of $\mathcal{O}_k$ whose Galois conjugates all have absolute value bounded in terms of $d$; hence, there is only a finite number of choices for these coefficients. In turn, there are only finitely many possibilities for $p(x)=p_1(x)p_2(x)$, finishing the proof of the proposition.
\end{proof}

We now provide a proof of Corollary \ref{cor:non-arithmetic}, which combines Proposition \ref{prop:arithmetic} and the \hyperlink{Theorem-SchottkyExtension}{Main Theorem} to construct non-arithmetic hyperbolic $d$-manifolds with systole $\log(\lambda)$, for any Salem number $\lambda$ and $d\geq3$.

\begin{proof}[Proof of Corollary \ref{cor:non-arithmetic}]

Set $\mu=\lambda+\lambda^{-1}$ and observe that the quadratic form $q$ defined by the symmetric matrix 
\[Q=
\begin{pmatrix}
    I_{d-1}   &    & \\
    &        1&    \mu/2\\
    &    \mu/2&    1
\end{pmatrix}
\]
is admissible over $k:=\Q(\mu)$. For any matrix $A\in\SO(d-1)$ with rational entries, the matrix 
\[\gamma_A:=\begin{pmatrix}
        A & &\\
        &\mu & 1\\
        & -1 & 0
    \end{pmatrix}
\]
is loxodromic in $\SO'(q,k)$ with complex translation length $(\log(\lambda),[A])$. Since there are infinitely many conjugacy classes of rational matrices in $\SO(d-1)$, Proposition \ref{prop:arithmetic} allows us to choose $A_0\in\SO(d-1)$ so that $\gamma_{A_0}$ cannot be contained in an arithmetic lattice of simplest type.\par

Our \hyperlink{Theorem-SchottkyExtension}{Main Theorem} produces a torsion-free lattice $\Gamma$ containing $\gamma_{A_0}$ such that $\sys(\Gamma\backslash\mathbb{H}^d)=\log(\lambda)$. We claim that the $\Gamma$ produced by our construction is non-arithmetic. Indeed, if $\Gamma$ is arithmetic, the construction and commensurability criterion of Gromov--Piatetski-Shapiro \cite{GromovPiatetski-Shapiro} implies that $\Gamma$ must be commensurable to $\SO'(q,\mathcal{O}_k)$, hence arithmetic of simplest type, contradicting the choice of $\gamma_{A_0}$.
\end{proof}

It is unclear to the authors at this time whether Corollary \ref{cor:non-arithmetic} (or more generally the \hyperlink{Theorem-SchottkyExtension}{Main Theorem}) produces lattices that are not commensurable to lattices arising from the doubling arguments, referred to by Agol as \textit{inbreeding} in \cite{Agol}. To make this precise, we say a hyperbolic $d$-manifold $M$ is \textbf{inbred} if there exists a torsion-free discrete subgroup $\Gamma$ of an arithmetic lattice of simplest type such that $N:=\mathrm{CC}(\Gamma\backslash\mathbb{H}^d)$ is finite volume with non-empty totally geodesic boundary, and $M$ is isometric to the result of doubling $N$ along its boundary. We now end with the following question.

\begin{question}
    Are there manifolds produced by the \hyperlink{Theorem-SchottkyExtension}{Main Theorem} which are not commensurable to any inbred hyperbolic manifold?
\end{question}

\section*{Acknowledgements}

The authors thank Ian Biringer, Tamunonye Cheetham-West, Sami Douba, Dongryul M. Kim, Yair Minsky, and Eduardo Reyes for helpful conversations, comments, and feedback. The first author is partially supported by NSF grant DMS-2005328. This project started during a visit of the first author to Boston College. The first author would like to thank the Boston College Department of Mathematics for their hospitality.

\bibliographystyle{plain}
\bibliography{refs}

\end{document}